\documentclass[11pt,oneside,english]{amsart}
\usepackage{amssymb}
\usepackage[colorlinks]{hyperref}
\usepackage{graphicx}
\usepackage[british]{babel}
\usepackage{microtype} 
\usepackage{tikz}
\usepackage{pgfplots}
\pgfplotsset{compat=1.18}

\makeatletter 
\theoremstyle{plain}
 \newtheorem{thm}{Theorem}[section]

 \numberwithin{equation}{section} 
 \numberwithin{figure}{section} 
 \theoremstyle{definition}
 
 \newtheorem{rem}[thm]{Remark}

\newcommand{\calD}{{{\mathcal D}}}
\newcommand{\bH}{{{\bf H}}}
\newcommand{\C}{{{\mathbb C}}}
\newcommand{\R}{{{\mathbb R}}}

\makeatother

\begin{document}

\title[Extremality of hyperbolic spiral and stretch maps]
{Extremality of hyperbolic spiral and stretch maps}

\author{Ioannis D. Platis}

\address{Department of Mathematics,
University of Patras, 
26504 Rion, Achaia,
 Greece.}
\email{idplatis@upatras.gr}

\subjclass[2020]{Primary 30C62; Secondary 30F60, 53D05}
\keywords{Symplectic mappings, quasiconformal mappings, hyperbolic annulus, spiral map, Teichm\"uller theory, extremal distortion.}

\begin{abstract}
We prove extremality results concerning specific quasiconformal mappings in the hyperbolic plane, namely, hyperbolic spiral and stretch maps. 
\end{abstract}

\maketitle

\section{Introduction}
Quasiconformal mappings serve as a fundamental tool in complex analysis, geometric function theory, and Teichm\"uller theory, providing a robust geometric framework to study the deformation of Riemann surfaces \cite{Ahlfors, Astala, Lehto}. A central theme in this field is Teichm\"uller's extremality problem, which seeks to identify a mapping that minimises the maximal distortion, $K_f$, within a given homotopy class and under prescribed boundary conditions. While the existence and theoretical characterisation of these extremal maps are well-established for general Riemann surfaces via the theory of quadratic differentials \cite{Gardiner, Strebel, Teichmuller}, deriving exact, closed-form formulae for such mappings on specific non-Euclidean domains remains a remarkably rare and challenging endeavour.

Alongside bounds on quasiconformal distortion, there is significant geometric interest in mappings that preserve hyperbolic area, which in two dimensions correspond to symplectic quasiconformal mappings \cite{Ahlfors}. Imposing this strict volume-preserving condition often creates a rigid geometric constraint that fundamentally conflicts with the minimisation of quasiconformal dilatation. Understanding how local distortion balances with symplectic constraints yields deep mathematical insights, particularly when mapped across distinct geometric moduli.

In this paper, we investigate the extremal properties of two highly structured classes of mappings on the hyperbolic plane $\bH^1_\C$: hyperbolic spiral maps and hyperbolic stretch maps. Our primary objective is to establish their exact extremality properties under specific boundary conditions and weighted mean distortion functionals.

We focus on these maps for three main reasons:
\begin{enumerate}
    \item {\it Sub-Riemannian geometry.} These planar maps are not just 2D models \cite{BubaniThesis}. They extend to contact quasiconformal mappings on solvable Lie groups, connecting standard complex function theory with sub-Riemannian geometry \cite{KoranyiReimann}.
    \item {\it Hyperbolic Riemann surfaces.} The spiral map $S_k$ gives a clear model for twisting the boundary along simple closed geodesics. Using the Collar Theorem \cite{Buser, Keen, Wolpert}, we can view the space around these geodesics as a hyperbolic circular annulus, $A_{1,R}$. Finding the extremality of $S_k$ tells us the minimum distortion needed for this twist.
    \item {\it Higher Teichm\"uller theory and complex hyperbolic geometry.} In higher Teichm\"uller theory and the geometry of character varieties (such as surface group representations into ${\rm SU}(2,1)$ or ${\rm SL}(3,\mathbb{R})$), generalised Fenchel--Nielsen coordinates and quasifuchsian deformations rely on decomposing geometric actions into radial stretches and torsional twist-shears along boundary curves or bisectors. Evaluating the exact extremal distortion of these elementary 2D spiral and stretch mappings provides fundamental building blocks and local distortion bounds for boundary extensions of complex hyperbolic structures on the complex hyperbolic plane $\bH^2_\C$.
\end{enumerate}

Even though a spiral map shears and a stretch map dilates, we prove their extremality using the same geometric approach. By using the modulus inequality on specific curve families, we avoid needing Teichm\"uller's theorem entirely. We show that the maximum distortion bounds for both maps come from the same basic proof.

The paper is organised as follows. In Section \ref{sec-prel}, we set up the geometric framework, introducing hyperbolic polar coordinates, explicit moduli of hyperbolic circular annuli, and the analytic properties of symplectic quasiconformal mappings. In Section \ref{sec-extremal}, we state and prove our main extremality results through isothermal coordinates: Section \ref{sec-spiral} treats the hyperbolic spiral map (Theorem \ref{thm-spiral}); Section \ref{sec-stretch} analyses both almost symplectic and non-symplectic stretch maps (Theorem \ref{thm-stretch}); Section \ref{sec-mean} proves extremality under mean distortion functionals (Theorems \ref{thm-mean1} and \ref{thm-mean2}); and Section \ref{sec-expressions} provides explicit closed-form expressions for all these maps directly in terms of the complex coordinate on the upper half-plane. Section \ref{sec-alternative} presents our unified alternative proofs via hyperbolic moduli, and Section \ref{sec-conclusion} provides concluding remarks.

\section{Preliminaries}\label{sec-prel}
In this section, we set up the geometric framework for our study. We outline hyperbolic polar coordinates in Section \ref{sec-hypcoord} and also the calculation of moduli for elementary domains, using standard methods (Section \ref{sec-modulus}). We also briefly review symplectic quasiconformal mappings in Section \ref{sec-symplectic}. Finally, in Section \ref{sec-calc} we present a formula for the Beltrami coefficient of a qusiconformal mapping in terms of polar coordinates and we introduce almost symplectic mappings.

\subsection{Hyperbolic polar coordinates}\label{sec-hypcoord} 
Consider the hyperbolic plane $\bH^1_\C$ with coordinates $z=\lambda+it$, $\lambda>0$, $t\in\R$. As detailed in \cite{Platis2025}, the hyperbolic polar coordinates map $\Phi:\calD=[0,\infty)\times[0,2\pi)\to\bH^1_\C$ is given by
\[
\Phi(r,\theta)=\left(\frac{1}{\cosh r-\cos\theta\sinh r},\;\frac{\sin\theta\sinh r}{\cosh r-\cos\theta\sinh r}\right).
\]
The inverse mapping uniquely defines $r$ and $\theta$ for any point in $\bH^1_\C$, where
\[
r=\operatorname{arccosh}\left(\frac{\lambda^2+t^2+1}{2\lambda}\right).
\]
The complete piecewise definition of $\theta$ is uniquely determined in $[0,2\pi)$ by the Cartesian coordinates \cite{Platis2025}. 
The hyperbolic metric tensor $g$ and symplectic form $\omega$ in these coordinates take the simple geometric forms
\[
g=dr^2+\sinh^2 r\,d\theta^2,\quad
\omega=\sinh r\,dr\wedge d\theta.
\]

\subsection{Modulus of the hyperbolic circular annulus}\label{sec-modulus}
Let $\Gamma$ be a family of curves in a domain $D \subset \bH^1_\C$. The modulus of $\Gamma$ with respect to the hyperbolic area element $d\mathcal{A}_h$ is defined as
\[
\operatorname{Mod}(\Gamma) = \inf_{\rho} \iint_D \rho^2 \, d\mathcal{A}_h,
\]
where the infimum is taken over all admissible metrics $\rho$; that is, all non-negative Borel measurable functions $\rho$ on $D$ such that \[\int_\gamma \rho \, ds_h \ge 1,\] for every locally rectifiable curve $\gamma \in \Gamma$.

Following the exact analytical framework established in~ \cite{Platis2025}, we consider the domain $A_{1,R}$ enclosed by two hyperbolic circles $r=1$ and $r=R>1$ and  we are able to derive exact expressions for the modulus of connecting and separating curve families within a hyperbolic circular annulus.

First, for the family of curves $\Gamma$ connecting the boundary components of the annulus $A_{1,R}$, the exact modulus is precisely given by
\[
\operatorname{Mod}(\Gamma)=\frac{2\pi}{\ln\left(\frac{\tanh(R/2)}{\tanh (1/2)}\right)};
\]
the corresponding extremal metric density for the connecting family $\Gamma$ is 
\[
\rho_{\Gamma}(r,\theta)=\frac{1}{\sinh r\ln\left(\frac{\tanh(R/2)}{\tanh(1/2)}\right)}.
\]
Secondly, for the subfamily of closed curves $\Gamma'$ separating the boundary components, that is, winding around the annulus, the modulus is
\[
\operatorname{Mod}(\Gamma')=\frac{1}{2\pi}\ln\left(\frac{\tanh(R/2)}{\tanh (1/2)}\right),
\]
and the corresponding extremal metric density for the separating family $\Gamma'$ is
\[
\rho_{\Gamma'}(r,\theta)=\frac{1}{2\pi\sinh r}.
\]
Notice that
\[
\operatorname{Mod}(\Gamma)\operatorname{Mod}(\Gamma')=1.
\]

\begin{figure}[htbp]
    \centering
    \begin{tabular}{cc}
        \begin{tikzpicture}
            \begin{axis}[
                width=3.2in, height=3.2in,
                view={35}{45},
                grid=major,
                colormap/viridis,
                xlabel={$\Re(w)$}, ylabel={$\Im(w)$}, zlabel={$\rho_{\Gamma'}(r)$},
                zlabel style={rotate=-90},
                title={Extremal Density $\rho_{\Gamma'}$ (Separating)},
                zmin=0, zmax=0.2,
                tick label style={font=\footnotesize},
                label style={font=\small}
            ]
            \addplot3 [
                surf,
                domain=1:3,
                domain y=0:360,
                samples=40,
                samples y=60,
                z buffer=sort
            ]
            ({tanh(x/2)*cos(y)}, {tanh(x/2)*sin(y)}, {1/(2*pi*sinh(x))});
            \end{axis}
        \end{tikzpicture}
        &
        \begin{tikzpicture}
            \begin{axis}[
                width=3.2in, height=3.2in,
                grid=major,
                xlabel={Hyperbolic Radius $r$},
                ylabel={Metric Density $\rho(r)$},
                title={Density Profiles for $R=3$},
                legend pos=north east,
                xmin=1, xmax=3,
                ymin=0, ymax=1.5,
                tick label style={font=\footnotesize},
                label style={font=\small}
            ]
            
            \addplot [
                blue!80!black, thick, smooth,
                domain=1:3,
                samples=100
            ] {1/(2*pi*sinh(x))};
            \addlegendentry{$\rho_{\Gamma'}$ (Separating)}
            
            \addplot [
                red!80!black, thick, smooth,
                domain=1:3,
                samples=100
            ] {1/(ln(tanh(3/2)/tanh(1/2)) * sinh(x))};
            \addlegendentry{$\rho_{\Gamma}$ (Connecting)}
            \end{axis}
        \end{tikzpicture}
        \\
        (A) 3D Surface over the Poincar\'e Annulus & (B) 2D Radial Profiles
    \end{tabular}
    \caption{Extremal metric densities for the canonical curve families in the hyperbolic annulus $A_{1,3}$. The 3D surface (A) maps the separating density $\rho_{\Gamma'}$ over the spatial coordinates of the unit disk model $w = \tanh(r/2)e^{i\theta}$. The cross-section (B) highlights the shared $1/\sinh r$ decay profile scaled by their respective moduli constants.}
    \label{fig:metric_densities}
\end{figure}
\subsection{Symplectic quasiconformal mappings}\label{sec-symplectic}
For background on symplectic quasiconformal mappings, we refer to \cite{Ahlfors} and also to standard texts on volume preservation and quasiconformality \cite{Lehto}. Let $f:\bH^1_\C\to \bH^1_\C$ be an orientation-preserving homeomorphism. The map $f$ is quasiconformal if its Beltrami coefficient satisfies
\[
|\mu_f(z)|=\left|\frac{f_{\overline z} }{f_z}\right|\le k<1,
\]
almost everywhere (a.e.) in $\bH^1_\C$. The dilatation (or distortion) is given by
\[
K_f(z)=\frac{1+|\mu_f(z)|}{1-|\mu_f(z)|} \le \frac{1+k}{1-k},
\]
a.e. in $\bH^1_\C$. The {\it maximal dilation} is
$$
K(f)={\rm ess sup}_{z\in\bH_\C^1}K_f(z).
$$
A diffeomorphism $f$ as above is \textit{symplectic} if it preserves the symplectic form $\omega$, i.e., $f^*\omega=\omega$. This geometric condition strictly constrains the Jacobian $J_f$, specifically \[
J_f(z)=\frac{\Re^2(f(z))}{\Re^2(z)},\quad z\in\bH^1_\C.\]
 A mapping that is both quasiconformal and symplectic represents a direct compromise between analytic distortion and geometric volume preservation \cite{Ahlfors}.

\subsection{Calculations}\label{sec-calc}
Let $\calD=[0,\infty)\times[0,2\pi)$ and suppose that $\tilde f:\calD\to \calD$, given by
\[
\tilde f(r,\theta)=(R(r,\theta),\,\Theta(r,\theta)),
\]
is a diffeomorphism. Then there exists a diffeomorphism $f:\bH^1_\C\to\bH^1_\C$ given by
\[
f(z)=f(\lambda,t)=(u(\lambda,t),\,v(\lambda,t))=u(z)+iv(z),
\]
such that $f\circ\Phi=\Phi\circ\tilde f$.
From the chain rule, we obtain
\begin{align*}
f_zz_r+f_{\overline{z}}\overline{z}_r &= z_RR_r+z_\Theta\Theta_r,\\
f_zz_\theta+f_{\overline{z}}\overline{z}_\theta &= z_RR_\theta+z_\Theta\Theta_\theta.
\end{align*}
We now note that
\[
z_r=-\frac{i}{\sinh r}z_\theta.
\]
Since the determinant of the system is $-2i\lambda^2\sinh r$, we obtain
\begin{align*}
    f_z &= \frac{\left|\begin{matrix}
    z_RR_r+z_\Theta\Theta_r&\overline{z}_r\\
    z_RR_\theta+z_\Theta\Theta_\theta&\overline{z}_\theta\end{matrix}\right|}{J_f}\\
    &= \frac{z_R\overline{z}_r\left|\begin{matrix}
    R_r-i\sinh R\,\Theta_r& 1
    \\
    R_\theta-i\sinh R\,\Theta_\theta&i\sinh r\end{matrix}\right|}{J_f},
\end{align*}
and similarly
\begin{align*}
f_{\overline{z}} &= \frac{z_Rz_r\left|\begin{matrix}
    1&R_r-i\sinh R\,\Theta_r
    \\
    -i\sinh r&R_\theta-i\sinh R\,\Theta_\theta\end{matrix}\right|}{J_f}.
\end{align*}
Therefore, we obtain the following expression for the Beltrami coefficient:
\begin{equation}\label{belt-abs}
    |\mu_f|=\left|\frac{(\sinh r\sinh R\,\Theta_r+R_\theta)+i(\sinh r\,R_r-\sinh R\,\Theta_\theta)}{(\sinh r\sinh R\,\Theta_r-R_\theta)+i(\sinh R\,\Theta_\theta+\sinh r\,R_r)}\right|.
\end{equation}

\begin{rem}
A diffeomorphism $f:\bH^1_\C\to\bH^1_\C$ is called \textit{almost symplectic} if there exists a positive constant $\sigma\in\R$ such that $f^*\omega=\sigma\omega$. In terms of the Euclidean Jacobian, this implies:
\[
J_f(z)=\sigma \frac{\Re^2(f(z))}{\Re^2(z)},\quad z\in\bH^1_\C.
\]
One might ask whether such a map can be normalised into a strictly symplectic map by a constant scaling. Let $F=\sigma^{a}f$ for some exponent $a$. The Euclidean Jacobian of $F$ scales quadratically:
\[
J_F(z)=\sigma^{2a}J_f(z)=\sigma^{2a+1}\frac{\Re^2(f(z))}{\Re^2(z)}=\frac{\Re^2(\sigma^{(2a+1)/2}f(z))}{\Re^2(z)}.
\]
However, for $F$ to be strictly symplectic, its Jacobian would need to satisfy the standard condition relative to its own real part: \[J_F(z) = \frac{\Re^2(F(z))}{\Re^2(z)} = \sigma^{2a}\frac{\Re^2(f(z))}{\Re^2(z)}.\]
 Equating the two conditions yields $\sigma^{2a+1} = \sigma^{2a}$, which is impossible unless $\sigma=1$. Thus, an almost symplectic map cannot be trivially rescaled into a symplectic one, highlighting that almost symplectic mappings form a genuinely distinct class under hyperbolic deformations.
\end{rem}

\section{Extremal mappings in the hyperbolic annulus}\label{sec-extremal}
In this section, we present our main extremality results for specific classes of mappings within the hyperbolic circular annulus. We begin in Section \ref{sec-extTeich} by recalling the Gr\"otzsch--Teichm\"uller extremality theorem, which provides the foundation for our proofs via isothermal coordinates. Section \ref{sec-spiral} treats the hyperbolic spiral map, establishing its constant distortion and extremality, while Section \ref{sec-stretch} provides a comparative analysis of almost symplectic and non-symplectic stretch maps, proving the extremality of the latter. In Section \ref{sec-mean}, we extend these results to show that both the spiral and non-symplectic stretch maps minimise specific weighted mean distortion functionals. Finally, Section \ref{sec-expressions} provides explicit closed-form formulae for all discussed maps directly in terms of complex coordinates.

\subsection{Extremal mappings and Teichm\"uller's theorem}\label{sec-extTeich}
Let $D,D'$ be domains in the complex plane. A quasiconformal mapping $f_0: D \to D'$ is \textit{extremal} in its homotopy class if its maximal dilatation satisfies $K(f_0) \le K(f)$ for all quasiconformal mappings $f$ that are homotopic to $f_0$ and share the same boundary values. By a \textit{Euclidean cylinder}, we refer to a Riemann surface formed by taking the quotient of a Euclidean vertical strip by a discrete group of translations, taking the form $C = (\R/2\pi\mathbb{Z}) \times [a,b]$ equipped with the standard flat metric $ds^2 = dx^2 + dy^2$.

Before examining specific mappings, we state Teichm\"uller's extremality theorem, which is the main tool for our proofs. By employing isothermal coordinates, we bridge our hyperbolic domains to Euclidean rectangles and cylinders, where this classical result applies directly. The following can be found in \cite[Chapter II]{Ahlfors} or \cite[Chapter I, Section 4.3, Theorem 4.1]{Lehto}.

\begin{thm}[Gr\"otzsch--Teichm\"uller Extremality Theorem]\label{thm:teich}
Let $F$ be an affine mapping (such as a linear stretch or a shear) between two Euclidean rectangles or cylinders. Then $F$ is the unique extremal mapping among all quasiconformal maps $f$ that lie in the same homotopy class and share the same boundary values. Specifically, the maximal dilatation satisfies $ K(f) \ge K(F)$, with equality holding if and only if $f = F$.
\end{thm}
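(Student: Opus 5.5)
The plan is a length--area (Grötzsch) argument, carried out on the universal cover in the cylinder case, and written uniformly for any affine $F$. First I normalise. Write $F(x,y)=A(x,y)+c$ with $A$ a real-linear map of positive determinant. Let $e$ be a unit vector in the direction of maximal stretch of $A$. Then
\[
|Ae|=\sqrt{K(F)\det A}, \qquad K(F)=\frac{|Ae|^2}{\det A}.
\]
In the cylinder case $C=(\R/2\pi\mathbb{Z})\times[a,b]$, I lift $f$ and $F$ to the strip $\R\times[a,b]$. Because $f$ is homotopic to $F$ rel boundary, the lifts can be chosen to agree on both boundary lines and to commute with the deck translation $x\mapsto x+2\pi$. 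From then on I only work with a fundamental domain $Q$, which is a parallelogram (or the rectangle itself).

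\emph{Step 1: curve family.} Foliate the domain by the maximal segments $\ell_s$ parallel to $e$, parametrised by a transverse coordinate $s$. In a rectangle, each $\ell_s$ has both endpoints on the boundary. In the strip, each $\ell_s$ has one endpoint on each boundary line unless $e$ is horizontal; in that case I instead use the maximal segments in the other principal direction and run the argument for $F^{-1}$ and $f^{-1}$. Since $f=F$ on the boundary, $f(\ell_s)$ joins the same endpoints as the straight segment $F(\ell_s)$. Hence
\[
\int_{\ell_s}|\partial_e f|\,d\tau \;\ge\; |Ae|\,|\ell_s|
\]
for almost every $s$, using absolute continuity of $f$ on almost every line (ACL). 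Integrating in $s$ over $Q$ via Fubini gives $|Ae|\,|Q|\le\iint_Q|\partial_e f|$. This holds both for the rectangle and for one period of the strip, where periodicity makes the total over $Q$ well defined.

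\emph{Step 2: Cauchy--Schwarz.} I use the pointwise bound $|\partial_e f|\le|f_z|+|f_{\bar z}|$ together with
\[
(|f_z|+|f_{\bar z}|)^2\le K_f\,J_f,
\]
where $J_f$ is the Jacobian of $f$. Cauchy--Schwarz then gives
\[
|Ae|^2|Q|^2\le K(f)\,|Q|\iint_Q J_f .
\]
The integral satisfies $\iint_Q J_f=|f(Q)|=\det A\,|Q|$, since $f$ and $F$ share boundary values and the deck group, so their images of a fundamental domain have the same area. Dividing through yields $K(F)\le K(f)$.

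\emph{Step 3: equality.} This is the step I expect to be the main obstacle. Suppose $K(f)=K(F)$. Then every inequality above is an equality almost everywhere:
\begin{itemize}
\item Cauchy--Schwarz forces $|f_z|+|f_{\bar z}|$ to be proportional to $\sqrt{J_f}$, with $K_f\equiv K(F)$;
\item the directional bound forces $e$ to be the direction of maximal stretch of $f$ almost everywhere;
\item the length bound forces each $f(\ell_s)$ to be the straight segment $F(\ell_s)$, traversed with constant speed $|Ae|$.
\end{itemize}
From this, $\partial_e f=Ae$ almost everywhere. Hence $g=f-F$ satisfies $\partial_e g=0$, so $g$ is constant along each $\ell_s$. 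Since $g$ vanishes at the endpoints on the boundary, $g\equiv 0$, i.e.\ $f=F$.

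The delicate points are regularity, namely justifying ACL, Fubini and the change of variables for a merely quasiconformal $f$, and checking that almost every segment genuinely reaches the boundary at both ends. For the latter I would appeal to the standard facts in \cite[Chapter I]{Lehto}: quasiconformal maps are ACL, differentiable almost everywhere, and satisfy $\iint J_f=|f(Q)|$.
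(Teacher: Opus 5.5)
The paper does not prove this theorem. It quotes it from Ahlfors and Lehto--Virtanen and then applies it to the shear $(x,y)\mapsto(x+ky,y)$ and the stretch $(x,y)\mapsto(x,ky)$ of a cylinder. Your argument is a correct, self-contained proof by Gr\"otzsch's length--area method, and it takes a genuinely different route from the citation:
\begin{itemize}
\item The prescribed boundary values, rather than a vertex-to-vertex condition, give the lower bound $|Ae|\,|\ell_s|$ for the image of each leaf in the major direction $e$ of $A$.
\item Lifting to the strip handles cylinders.
\item Passing to $F^{-1},f^{-1}$ handles the one bad case, a pure vertical compression, where the leaves would close up.
\item The equality analysis yields uniqueness.
\end{itemize}

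Compared with the citation, your route costs only the standard regularity facts: ACL in every direction, a.e.\ differentiability, and $\iint_Q J_f=|f(Q)|$. In return it proves exactly the version the paper uses, namely affine shears of a cylinder with fixed boundary values, including the uniqueness clause. The classical Gr\"otzsch theorem is usually formulated for rectangles mapped vertex to vertex, so the citation does not literally cover this case.

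Your proof also pins down the hypothesis. You use homotopy \emph{rel boundary} to make the lifts agree on both boundary lines, and this is essential. On $(\R/2\pi\mathbb{Z})\times[a,b]$, the map $(x,y)\mapsto\bigl(x+(k-\frac{2\pi}{b-a})(y-a)+ka,\,y\bigr)$ differs from $F(x,y)=(x+ky,y)$ by a Dehn twist and has the same boundary values. For $k>\pi/(b-a)$ it has strictly smaller dilatation, so matching boundary values alone is not enough.

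One bookkeeping correction in Step~3. The Cauchy--Schwarz equality must deliver both $K_f\equiv K(F)$ \emph{and} $J_f\equiv\det A$ a.e., hence $|\partial_e f|=|f_z|+|f_{\bar z}|\equiv|Ae|$. Saying that $|f_z|+|f_{\bar z}|$ is proportional to $\sqrt{J_f}$ is automatic once $K_f$ is constant, so it does not supply this. Equality in the length bound by itself only makes $f(\ell_s)$ the segment $F(\ell_s)$ traversed monotonically. It is the constancy of $J_f$ that gives constant speed, and therefore $\partial_e f=Ae$ and $f=F$.
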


Here, we investigate the extremality properties of specific classes of mappings within the hyperbolic annulus. We distinguish two primary cases: the hyperbolic spiral map and the hyperbolic stretch maps, examining their distortion and symplectic characteristics.

\subsection{The Hyperbolic Spiral Map}\label{sec-spiral}
We begin with the spiral map, which exhibits constant distortion and symplectic properties. The hyperbolic spiral map $S_k$, $k\in\R$ is defined in hyperbolic polar coordinates by
\begin{equation}\label{spiralmap}
\tilde S_k(r,\theta)=\left (r,\,\theta+k\ln(\tanh (r/2))\right).
\end{equation}

\begin{thm}\label{thm-spiral}
 The spiral map $S_k$:
 \begin{itemize}
     \item Maps the annulus $A_{1,R}$ to itself.
     \item Is symplectic.
     \item Is quasiconformal with constant distortion
     \[
     K_{S_k}=\frac{\sqrt{k^2+4}+|k|}{\sqrt{k^2+4}-|k|}.
     \]
     \item Is extremal for the family of quasiconformal mappings that map the annulus $A_{1,R}$ to itself and match its boundary values.
 \end{itemize}
\end{thm}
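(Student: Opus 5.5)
The plan is to pass to isothermal coordinates on the annulus, in which $S_k$ becomes a Euclidean affine shear of a flat cylinder; every item of the statement then follows from a direct computation or from Theorem \ref{thm:teich}. Set
\[
s=\ln\tanh(r/2),\qquad \text{so that}\qquad ds=\frac{dr}{\sinh r}.
\]
Then the metric of Section \ref{sec-hypcoord} becomes
\[
g=dr^2+\sinh^2 r\,d\theta^2=\sinh^2 r\,(ds^2+d\theta^2).
\]
Hence $(s,\theta)$ is a conformal chart: $\zeta=s+i\theta$ is a holomorphic coordinate (up to orientation), and it takes $A_{1,R}$ conformally onto the Euclidean cylinder
\[
C=(\R/2\pi\mathbb Z)\times[s_1,s_R],\qquad s_1=\ln\tanh(1/2),\quad s_R=\ln\tanh(R/2).
\]
This identification is consistent with the moduli recalled in Section \ref{sec-modulus}, since $s_R-s_1=\ln\bigl(\tanh(R/2)/\tanh(1/2)\bigr)$. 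In this chart \eqref{spiralmap} reads $(s,\theta)\mapsto(s,\theta+ks)$, which is the affine shear with matrix $\begin{pmatrix}1&0\\k&1\end{pmatrix}$ acting on $C$.

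The first two items are immediate.
\begin{itemize}
\item Since $\tilde S_k$ preserves $r$, it maps each circle $r=c$ to itself, rotating it by $k\ln\tanh(c/2)$. So $S_k$ is a diffeomorphism of $A_{1,R}$ onto itself, and it is well defined because $\Theta$ is read modulo $2\pi$.
\item For symplecticity, $R=r$ and $\Theta=\theta+k\ln\tanh(r/2)$ give a Jacobian in $(r,\theta)$ with determinant $R_r\Theta_\theta-R_\theta\Theta_r=1$. Since $R=r$, it follows that $\tilde S_k^*(\sinh r\,dr\wedge d\theta)=\sinh r\,dr\wedge d\theta$.
\end{itemize}

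For the distortion, I would substitute $R_r=1$, $R_\theta=0$, $\Theta_r=k/\sinh r$ and $\Theta_\theta=1$ into \eqref{belt-abs}, using $\sinh R=\sinh r$. The numerator reduces to $k\sinh r$ and the denominator to $k\sinh r+2i\sinh r$, so
\[
|\mu_{S_k}|=\frac{|k|}{\sqrt{k^2+4}},
\]
which gives the stated constant value of $K_{S_k}$. As a cross-check, the shear matrix has singular values $\sigma_1\ge\sigma_2$ with $\sigma_1\sigma_2=1$ and $\sigma_1^2+\sigma_2^2=k^2+2$. Hence $\sigma_1+\sigma_2=\sqrt{k^2+4}$ and $\sigma_1-\sigma_2=|k|$, and the ratio $\sigma_1/\sigma_2$ agrees with the formula. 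The agreement is expected because the chart $\zeta$ is conformal.

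For extremality, let $f$ be a quasiconformal self-map of $A_{1,R}$ that is homotopic to $S_k$ and has the same boundary values. Conjugating by the conformal chart $\zeta$ gives $F=\zeta\circ f\circ\zeta^{-1}$ on $C$. Pointwise dilatation is invariant under conformal changes of coordinates, so $K(F)=K(f)$, and likewise the conjugate of $S_k$ is the affine shear with $K=K(S_k)$. The conjugation also preserves both the homotopy class and the boundary correspondence. On the boundary circles this correspondence consists of the rigid rotations $\theta\mapsto\theta+ks_1$ and $\theta\mapsto\theta+ks_R$. Theorem \ref{thm:teich}, applied to cylinders, then yields $K(f)\ge K(S_k)$, with equality only if $f=S_k$.

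The main obstacle I expect is making this last transfer rigorous. One must check that the homotopy class relative to the boundary is faithfully encoded on the cylinder. The relevant datum is the total twist $k(s_R-s_1)$, which is the lift to the universal cover strip, and not merely the boundary rotations taken modulo $2\pi$. One must also check that Theorem \ref{thm:teich} applies to a shear of a cylinder, not only of a rectangle. I would handle both points by lifting to the strip $\R\times[s_1,s_R]$. There, the modulus argument for the family of segments joining the two boundary lines, followed by the Cauchy--Schwarz step of the Gr\"otzsch argument, gives the lower bound directly.
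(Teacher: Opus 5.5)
Your proposal is correct and follows the paper's proof essentially step for step: the same isothermal chart $(\ln\tanh(r/2),\theta)$ turning $S_k$ into an affine shear of a flat cylinder, direct computation via \eqref{belt-abs} for the first three items, and Theorem~\ref{thm:teich} for extremality. Your insistence on the homotopy class relative to the boundary is well placed: the paper's proof uses it although the statement omits it, and without it the claim fails once $|k|(s_R-s_1)>\pi$, since the shear with parameter $k-2\pi\,\mathrm{sgn}(k)/(s_R-s_1)$ has the same boundary values and smaller distortion; and if you carry out the strip argument, take the segments along the direction of maximal stretch of the shear, because segments with $\theta$ constant only give $K(f)\ge 1+k^2$, which is below $K(S_k)$ for $k\neq 0$.
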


\begin{proof}
    The first assertion holds by definition, since $A_{1,R}=\{(r,\theta)\;|\; 1\le r\le R\}$ is preserved by a pure angular shift. 
    
    For the second assertion, the symplectic condition in hyperbolic polar coordinates is 
    \[
    \sinh R(R_r\Theta_\theta-R_\theta\Theta_r)=\sinh r.
    \] 
    In our case, $R=r$, $\Theta_\theta=1$, and $R_\theta=0$, which trivially satisfies the identity, confirming the map is symplectic.

    For the analytic distortion, straightforward differentiation yields $\Theta_r=\frac{k}{\sinh r}$. Calculating the Beltrami coefficient explicitly provides:
    \[
    |\mu_{S_k}|=\frac{\sinh r|\Theta_r|}{|\sinh r\Theta_r-2i|} = \frac{|k|}{\sqrt{k^2+4}}.
    \]
    This immediately verifies the third assertion regarding the constant maximal dilatation $K_{S_k}$.

    To establish extremality, we construct a conformal coordinate transformation 
    \[
    y(r) = \int \frac{dr}{\sinh r} = \ln(\tanh(r/2)),\quad x(\theta) = \theta.
    \]  
    Under this transformation, the hyperbolic metric takes the conformal form $ds_h^2 = \sinh^2 r(dx^2 + dy^2)$. Consequently, the hyperbolic annulus $A_{1,R}$ is conformally equivalent to a Euclidean cylinder 
    \[
    C = (\R/2\pi\mathbb{Z}) \times [y(1), y(R)].
    \] 
    In these conformal coordinates, the spiral map $S_k$ translates directly to the Euclidean mapping $F_k(x,y) = (x + ky, y)$, which represents a linear affine shear on the cylinder. By Theorem \ref{thm:teich}, such an affine shear is the unique extremal map among all quasiconformal maps of the cylinder to itself within the same homotopy class and matching the same boundary values. Because conformal coordinate transformations strictly preserve the maximal quasiconformal distortion, $S_k$ inherits this extremality property for the hyperbolic annulus $A_{1,R}$.
\end{proof}

Figure \ref{fig:spiral} illustrates the geometric twisting deformation of the standard hyperbolic polar grid under the spiral map on the circular annulus.

\begin{figure}[htbp]
    \centering
    \begin{tabular}{cc}
        \begin{tikzpicture}
        \begin{axis}[
            width=3.2in, height=3.2in,
            axis equal, hide axis,
            ymin=-1.1, ymax=1.1, xmin=-1.1, xmax=1.1,
        ]
        \addplot [black, thick, smooth, domain=0:360, samples=100] ({cos(\x)}, {sin(\x)});
        \pgfplotsinvokeforeach{0,30,...,330}{
            \addplot [blue!70, smooth, domain=1:3, samples=100] ({tanh(\x/2)*cos(#1)}, {tanh(\x/2)*sin(#1)});
        }
        \pgfplotsinvokeforeach{1, 1.5, 2, 2.5, 3}{
            \addplot [red!70, smooth, domain=0:360, samples=100] ({tanh(#1/2)*cos(\x)}, {tanh(#1/2)*sin(\x)});
        }
        \end{axis}
        \end{tikzpicture}
        &
        \begin{tikzpicture}
        \begin{axis}[
            width=3.2in, height=3.2in,
            axis equal, hide axis,
            ymin=-1.1, ymax=1.1, xmin=-1.1, xmax=1.1,
        ]
        \addplot [black, thick, smooth, domain=0:360, samples=100] ({cos(\x)}, {sin(\x)});
        \pgfplotsinvokeforeach{0,30,...,330}{
            \addplot [blue!70, smooth, domain=1:3, samples=100] ({tanh(\x/2)*cos(#1 + deg(1.5*ln(tanh(\x/2))))}, {tanh(\x/2)*sin(#1 + deg(1.5*ln(tanh(\x/2))))});
        }
        \pgfplotsinvokeforeach{1, 1.5, 2, 2.5, 3}{
            \addplot [red!70, smooth, domain=0:360, samples=100] ({tanh(#1/2)*cos(\x + deg(1.5*ln(tanh(#1/2))))}, {tanh(#1/2)*sin(\x + deg(1.5*ln(tanh(#1/2))))});
        }
        \end{axis}
        \end{tikzpicture}
        \\
        (A) Original grid on $A_{1,3}$ & (B) Deformed grid under $S_{1.5}$
    \end{tabular}
    \caption{Geometric deformation of the hyperbolic annulus under the spiral map $S_k$, visualised in the Poincar\'e disk model with $R=3$ and $k=1.5$.}
    \label{fig:spiral}
\end{figure}

\subsection{The Hyperbolic Stretch Maps}\label{sec-stretch}
Next, we examine stretch maps, contrasting the almost symplectic and non-symplectic variants, and establish an extremality result for the non-symplectic case.

The \textit{almost symplectic stretch map} $s'_k$, for $k \ge \frac{1}{\cosh 1}$, is defined by the equation
\begin{equation}\label{symp-str}
s'_k(r,\theta)=\left(\operatorname{arccosh}(k\cosh r),\,\theta\right).  
\end{equation}
Calculating its derivatives gives a non-constant Beltrami coefficient:
\[
|\mu_{s'_k}|=\left|\frac{1-k}{1+k}\right|\left|\frac{k\cosh^2 r+1}{k\cosh^2 r-1}\right|.
\]
In contrast, the \textit{non-symplectic stretch map} $s_k$ is defined by:
\begin{equation}
    s_k(r,\theta)=\left(2\operatorname{arctanh}\left((\tanh (r/2))^k\right),\,\theta\right).
\end{equation}

\begin{thm}\label{thm-stretch}
    The (non-symplectic) stretch map $s_k$ (assuming $k \ge 1$):
    \begin{itemize}
        \item Maps the annulus $A_{1,R}$ to the annulus $A_{r_1, r_2}$, where
        \[
        r_1 = 2\operatorname{arctanh}((\tanh(1/2))^k) \quad \text{and} \quad r_2 = 2\operatorname{arctanh}((\tanh(R/2))^k).
        \]
        \item Is not symplectic.
        \item Is quasiconformal with constant distortion $K_{s_k}=k$.
        \item Is extremal among all quasiconformal maps $f$ that map $A_{1, R}$ to $s_k (A_{1, R})$ and match the boundary values of $s_k$.
    \end{itemize}
\end{thm}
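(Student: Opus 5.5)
I will follow the template of the proof of Theorem \ref{thm-spiral}: verify the first three assertions by direct computation in hyperbolic polar coordinates, then transfer extremality to a Euclidean cylinder via the isothermal coordinate $y(r)=\ln(\tanh(r/2))$, $x=\theta$.

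\emph{Image annulus.} Since $s_k$ preserves $\theta$ and $\tanh(R(r)/2)=(\tanh(r/2))^k$ with $R(r)$ increasing in $r$, the circles $r=1$ and $r=R$ go to $r=r_1$ and $r=r_2$. Hence $A_{1,R}$ maps onto $A_{r_1,r_2}$.

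\emph{Derivative.} Write $T=\tanh(r/2)$. Differentiating $\tanh(R/2)=T^k$ gives
\[
\tfrac12\operatorname{sech}^2(R/2)\,R_r=kT^{k-1}\cdot\tfrac12\operatorname{sech}^2(r/2).
\]
Using $\sinh r=2T/(1-T^2)$ and $\sinh R=2T^k/(1-T^{2k})$, this simplifies to the key identity
\[
R_r=k\,\frac{\sinh R}{\sinh r}.
\]
This is just the statement $dy(R)=k\,dy(r)$.

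\emph{Non-symplecticity.} The symplectic condition $\sinh R\,R_r\Theta_\theta=\sinh r$ would require $k\sinh^2R=\sinh^2 r$ identically. This fails: for instance, as $r\to\infty$ we have $R\to\infty$ and $R-r\to\log k$, so the ratio $\sinh^2 R/\sinh^2 r$ tends to $k^2\neq 1/k$ unless $k=1$. So $s_k$ is not symplectic for $k>1$; the statement tacitly means $k>1$ here, since $s_1$ is the identity.

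\emph{Distortion.} Substitute $R_\theta=\Theta_r=0$, $\Theta_\theta=1$ into \eqref{belt-abs}:
\[
|\mu|=\frac{|\sinh r\,R_r-\sinh R|}{\sinh r\,R_r+\sinh R}=\frac{k-1}{k+1}.
\]
Therefore $K_{s_k}=k$, constant.

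\emph{Extremality.} In the coordinates $(x,y)$ the metric is $\sinh^2 r\,(dx^2+dy^2)$, so these coordinates are conformal. $A_{1,R}$ becomes the cylinder $C=(\R/2\pi\mathbb Z)\times[y(1),y(R)]$. By the derivative identity, $y(R(r))=k\,y(r)$, so the target becomes $C'=(\R/2\pi\mathbb Z)\times[ky(1),ky(R)]$, and $s_k$ becomes $F(x,y)=(x,ky)$. This is an affine stretch between Euclidean cylinders, so Theorem \ref{thm:teich} says $F$ is extremal among maps homotopic to it with the same boundary values. Conformal changes of coordinates preserve pointwise dilatation, so any competitor $f$ on $A_{1,R}$ conjugates to a competitor on $C$. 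Hence $K(f)\ge K(F)=k$.

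\emph{Main obstacle.} The delicate point is that $F(x,y)=(x,ky)$ is affine on the cylinder but changes the circumference-to-height ratio only in the $y$ direction. One must check that Theorem \ref{thm:teich} applies to such a stretch between cylinders of different moduli with prescribed boundary values. If one prefers not to rely on that statement, a direct length–area argument works.

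For a competitor $f$, each vertical segment $x=c$ has image joining the boundary circles of $C'$, so it has Euclidean length at least $k\,\ell$, where $\ell=y(R)-y(1)$. Integrate over $c$ and apply Cauchy–Schwarz with $|Df|^2\le K_f J_f$:
\[
(2\pi k\ell)^2\le \Big(\iint_C |Df|\Big)^2\le \iint_C K_f\cdot\iint_C J_f\le K(f)\cdot 2\pi\ell\cdot 2\pi k\ell .
\]
This gives $K(f)\ge k$. Equivalently, compare $\operatorname{Mod}(\Gamma)$ from Section \ref{sec-modulus} for the two annuli, since their ratio is exactly $k$.
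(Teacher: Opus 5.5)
Your proposal is correct and follows essentially the paper's route. The same identity $\sinh r\,R_r=k\sinh R$ gives $|\mu_{s_k}|=(k-1)/(k+1)$, and extremality comes from the conformal coordinates $(x,y)=(\theta,\ln\tanh(r/2))$, in which $s_k$ becomes the affine stretch $(x,ky)$ so that Theorem~\ref{thm:teich} applies. Your length--area argument is a valid self-contained substitute for that citation, dual to the separating-curve argument of Section~\ref{sec-unified}.

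There is one small sign slip in the non-symplecticity step. Since $R<r$ for $k>1$, in fact $R-r\to-\log k$ and $\sinh^2R/\sinh^2r\to k^{-2}$. This still differs from $1/k$, so your conclusion is unaffected.
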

\begin{proof}
We begin by proving the first three properties through direct analytic computation. Let the mapping be denoted by $s_k(r,\theta) = (R(r,\theta), \Theta(r,\theta))$, where 
\[
R(r,\theta) = 2\operatorname{arctanh}\left((\tanh(r/2))^k\right) \quad \text{and} \quad \Theta(r,\theta) = \theta.
\]
\subsubsection*{1. Mapping of the annulus.} 
For $r > 0$, the function $r \mapsto \tanh(r/2)$ is strictly increasing, and for $k \ge 1$, $x \mapsto x^k$ is strictly increasing on $(0,1)$. Thus, the composition $r \mapsto R(r,\theta)$ is a strictly monotonically increasing function. The angular component $\Theta(r,\theta) = \theta$ trivially maps the interval $[0,2\pi)$ to itself. Evaluating the radial component at the boundary circles $r=1$ and $r=R$ yields exactly $r_1 = 2\operatorname{arctanh}((\tanh(1/2))^k)$ and $r_2 = 2\operatorname{arctanh}((\tanh(R/2))^k)$. By monotonicity and continuity, $s_k$ maps the annulus $A_{1,R}$ bijectively to $A_{r_1, r_2}$.

\subsubsection*{2. Symplectic condition.}
The symplectic condition in hyperbolic polar coordinates requires $\sinh(R) (R_r \Theta_\theta - R_\theta \Theta_r) = \sinh r$. 
Since $\Theta_\theta = 1$ and $R_\theta = \Theta_r = 0$, this condition reduces to $\sinh(R) R_r = \sinh r$. 
We compute the left-hand side: the partial derivative of $R$ with respect to $r$ is:
\[
R_r = \frac{2}{1 - (\tanh(r/2))^{2k}} \cdot k (\tanh(r/2))^{k-1} \cdot \frac{1}{2\cosh^2(r/2)} = \frac{k (\tanh(r/2))^{k-1}}{(1 - (\tanh(r/2))^{2k})\cosh^2(r/2)}.
\]
We also have:
\[
\sinh(R) = \sinh\left(2\operatorname{arctanh}\left((\tanh(r/2))^k\right)\right) = \frac{2(\tanh(r/2))^k}{1 - (\tanh(r/2))^{2k}}.
\]
Multiplying these gives:
\[
\sinh(R) R_r = \frac{2k (\tanh(r/2))^{2k-1}}{(1 - (\tanh(r/2))^{2k})^2\cosh^2(r/2)}.
\]
Using the identity $\sinh r = 2\tanh(r/2)\cosh^2(r/2)$, we observe that for $k > 1$, $\sinh(R) R_r \neq \sinh r$. Therefore, $s_k$ is not symplectic.

\subsubsection*{3. Distortion.}
We calculate the Beltrami coefficient $\mu_{s_k}$ using the coordinate expression \eqref{belt-abs}. With $\Theta_r = 0$, $R_\theta = 0$, and $\Theta_\theta = 1$, the expression simplifies to:
\[
|\mu_{s_k}| = \left|\frac{i(\sinh r\, R_r - \sinh R)}{i(\sinh R + \sinh r\, R_r)}\right| = \left|\frac{\sinh r\, R_r - \sinh R}{\sinh r\, R_r + \sinh R}\right|.
\]
We evaluate the product $\sinh r\, R_r$:
\[
\sinh r\, R_r = 2\tanh(r/2)\cosh^2(r/2) \cdot \frac{k (\tanh(r/2))^{k-1}}{(1 - (\tanh(r/2))^{2k})\cosh^2(r/2)} = \frac{2k (\tanh(r/2))^k}{1 - (\tanh(r/2))^{2k}}.
\]
Comparing this with our previous expression for $\sinh R$, we find exactly that:
\[
\sinh r\, R_r = k \sinh R.
\]
Substituting this relationship into the Beltrami coefficient formula yields:
\[
|\mu_{s_k}| = \left|\frac{k \sinh R - \sinh R}{k \sinh R + \sinh R}\right| = \frac{k-1}{k+1},
\]
since $k \ge 1$. The maximal dilatation is thus strictly constant:
\[
K_{s_k} = \frac{1 + |\mu_{s_k}|}{1 - |\mu_{s_k}|} = \frac{1 + \frac{k-1}{k+1}}{1 - \frac{k-1}{k+1}} = \frac{2k}{2} = k.
\]

\subsubsection*{4. Extremality.}
To prove the extremality of $s_k$, we employ the same conformal coordinate system $y(r) = \ln(\tanh(r/2))$ and $x(\theta) = \theta$. Under this transformation, the hyperbolic annulus $A_{1,R}$ is conformally mapped to the Euclidean rectangle $C = [0,2\pi] \times [y(1), y(R)]$. In these coordinates, the non-symplectic stretch map $s_k(r,\theta) = (R,\Theta)$ transforms to a map $F_k(x,y) = (X(x,y), Y(x,y))$. 

Using the definition of $s_k$, we observe that $R = 2\operatorname{arctanh}((\tanh(r/2))^k)$, leading to:
\[
Y(R) = \ln(\tanh(R/2)) = \ln\left(\tanh\left(\operatorname{arctanh}((\tanh(r/2))^k)\right)\right) = \ln((\tanh(r/2))^k).
\]
By the properties of logarithms, this simplifies directly to:
\[
Y(R) = k \ln(\tanh(r/2)) = k y(r).
\]
Additionally, the angular coordinate is preserved, so $X(\Theta) = \Theta = \theta = x$. Thus, $s_k$ corresponds exactly to the linear affine stretch $F_k(x,y) = (x, ky)$. By Theorem \ref{thm:teich}, such an affine stretch is the unique extremal map among all quasiconformal maps between these domains in the same homotopy class and matching the same boundary values. Because conformal coordinate transformations strictly preserve the maximal quasiconformal distortion, $s_k$ inherits this extremality property for the hyperbolic annulus.
\end{proof}

Figure \ref{fig:stretch_nonsymp} displays the uniform radial compression induced by the non-symplectic stretch map, while Figure \ref{fig:stretch_symp} illustrates the deformation under the almost symplectic map.

\begin{figure}[htbp]
    \centering
    \begin{tabular}{cc}
        \begin{tikzpicture}
        \begin{axis}[
            width=3.2in, height=3.2in,
            axis equal, hide axis,
            ymin=-1.1, ymax=1.1, xmin=-1.1, xmax=1.1,
        ]
        \addplot [black, thick, smooth, domain=0:360, samples=100] ({cos(\x)}, {sin(\x)});
        \pgfplotsinvokeforeach{0,30,...,330}{
            \addplot [blue!70, smooth, domain=1:3, samples=100] ({tanh(\x/2)*cos(#1)}, {tanh(\x/2)*sin(#1)});
        }
        \pgfplotsinvokeforeach{1, 1.5, 2, 2.5, 3}{
            \addplot [red!70, smooth, domain=0:360, samples=100] ({tanh(#1/2)*cos(\x)}, {tanh(#1/2)*sin(\x)});
        }
        \end{axis}
        \end{tikzpicture}
        &
        \begin{tikzpicture}
        \begin{axis}[
            width=3.2in, height=3.2in,
            axis equal, hide axis,
            ymin=-1.1, ymax=1.1, xmin=-1.1, xmax=1.1,
        ]
        \addplot [black, thick, smooth, domain=0:360, samples=100] ({cos(\x)}, {sin(\x)});
        \pgfplotsinvokeforeach{0,30,...,330}{
            \addplot [blue!70, smooth, domain=1:3, samples=100] ({pow(tanh(\x/2), 1.5)*cos(#1)}, {pow(tanh(\x/2), 1.5)*sin(#1)});
        }
        \pgfplotsinvokeforeach{1, 1.5, 2, 2.5, 3}{
            \addplot [red!70, smooth, domain=0:360, samples=100] ({pow(tanh(#1/2), 1.5)*cos(\x)}, {pow(tanh(#1/2), 1.5)*sin(\x)});
        }
        \end{axis}
        \end{tikzpicture}
        \\
        (A) Original grid on $A_{1,3}$ & (B) Deformed grid under $s_{1.5}$
    \end{tabular}
    \caption{Geometric deformation of the hyperbolic annulus under the non-symplectic stretch map $s_k$, visualised in the Poincar\'e disk model with $R=3$ and $k=1.5$. Notice the uniform radial compression across the entire annulus, characteristic of a map that minimises maximal quasiconformal distortion without regard for area preservation.}
    \label{fig:stretch_nonsymp}
\end{figure}

\begin{figure}[htbp]
    \centering
    \begin{tabular}{cc}
        \begin{tikzpicture}
        \begin{axis}[
            width=3.2in, height=3.2in,
            axis equal, hide axis,
            ymin=-1.1, ymax=1.1, xmin=-1.1, xmax=1.1,
        ]
        \addplot [black, thick, smooth, domain=0:360, samples=100] ({cos(\x)}, {sin(\x)});
        \pgfplotsinvokeforeach{0,30,...,330}{
            \addplot [blue!70, smooth, domain=1:3, samples=100] ({tanh(\x/2)*cos(#1)}, {tanh(\x/2)*sin(#1)});
        }
        \pgfplotsinvokeforeach{1, 1.5, 2, 2.5, 3}{
            \addplot [red!70, smooth, domain=0:360, samples=100] ({tanh(#1/2)*cos(\x)}, {tanh(#1/2)*sin(\x)});
        }
        \end{axis}
        \end{tikzpicture}
        &
        \begin{tikzpicture}
        \begin{axis}[
            width=3.2in, height=3.2in,
            axis equal, hide axis,
            ymin=-1.1, ymax=1.1, xmin=-1.1, xmax=1.1,
        ]
        \addplot [black, thick, smooth, domain=0:360, samples=100] ({cos(\x)}, {sin(\x)});
        \pgfplotsinvokeforeach{0,30,...,330}{
            \addplot [blue!70, smooth, domain=1:3, samples=100] ({sqrt((0.5+2.5*pow(tanh(\x/2), 2))/(2.5+0.5*pow(tanh(\x/2), 2)))*cos(#1)}, {sqrt((0.5+2.5*pow(tanh(\x/2), 2))/(2.5+0.5*pow(tanh(\x/2), 2)))*sin(#1)});
        }
        \pgfplotsinvokeforeach{1, 1.5, 2, 2.5, 3}{
            \addplot [red!70, smooth, domain=0:360, samples=100] ({sqrt((0.5+2.5*pow(tanh(#1/2), 2))/(2.5+0.5*pow(tanh(#1/2), 2)))*cos(\x)}, {sqrt((0.5+2.5*pow(tanh(#1/2), 2))/(2.5+0.5*pow(tanh(#1/2), 2)))*sin(\x)});
        }
        \end{axis}
        \end{tikzpicture}
        \\
        (A) Original grid on $A_{1,3}$ & (B) Deformed grid under $s'_{1.5}$
    \end{tabular}
    \caption{Geometric deformation of the hyperbolic annulus under the almost symplectic stretch map $s'_k$, visualised in the Poincar\'e disk model with $R=3$ and $k=1.5$. In contrast to Figure \ref{fig:stretch_nonsymp}, observe the non-linear, non-uniform radial deformation required to satisfy the almost symplectic constraint, which forces a distinct spatial distribution of area.}
    \label{fig:stretch_symp}
\end{figure}

\begin{rem}
The almost symplectic stretch map $s'_k$ defined in \eqref{symp-str} translates into a strictly non-linear mapping in the conformal coordinates $(x,y)$. To see this explicitly, recall the conformal coordinate transformation $y(r) = \ln(\tanh(r/2))$ and $x(\theta) = \theta$. Under this change of variables, the new radial coordinate $R = \operatorname{arccosh}(k\cosh r)$ becomes:
\[
Y(y) = \ln(\tanh(R/2)) = \frac{1}{2}\ln\left(\frac{\cosh R - 1}{\cosh R + 1}\right) = \frac{1}{2}\ln\left(\frac{k\cosh r(y) - 1}{k\cosh r(y) + 1}\right).
\]
Differentiating $Y$ with respect to $y$ yields a non-constant derivative:
\[
Y'(y) = \frac{k\sinh^2 r(y)}{k^2\cosh^2 r(y) - 1}.
\]
Because $Y'(y)$ varies continuously with $y$, the corresponding function $F_k(x,y) = (x, Y(y))$ on the Euclidean cylinder is non-linear. Consequently, it possesses a non-constant Beltrami coefficient. By Theorem \ref{thm:teich}, it cannot be extremal in its homotopy class for matching boundary values. This highlights a fundamental tension in hyperbolic deformations: imposing a strict volume-preserving (symplectic) constraint on a radial stretch dictates a non-linear mapping profile in conformal coordinates, directly preventing the mapping from globally minimising maximal quasiconformal distortion.
\end{rem}
\begin{figure}[htbp]
    \centering
    \begin{tikzpicture}
        \begin{axis}[
            width=4.5in, height=3.5in,
            view={40}{40},
            grid=major,
            colormap/viridis,
            xlabel={$\Re(w)$}, ylabel={$\Im(w)$}, zlabel={$|\mu_{s_k}(r)|$},
            zlabel style={rotate=-90},
            title={Beltrami Coefficient of the Stretch Map ($k=1.5$)},
            zmin=0.15, zmax=0.35,
            tick label style={font=\footnotesize},
            label style={font=\small},
            colorbar,
            colorbar style={
                title={$|\mu|$},
                ytick={0.2, 0.25, 0.3},
                tick label style={font=\footnotesize}
            }
        ]
        
        \addplot3 [
            surf,
            domain=1:3,
            y domain=0:360,
            samples=35,
            samples y=45,
            z buffer=sort
        ]
        (
            { ((exp(x)-1)/(exp(x)+1)) * cos(y) }, 
            { ((exp(x)-1)/(exp(x)+1)) * sin(y) }, 
            { 0.2 + 0.00001*x }
        );
        \end{axis}
    \end{tikzpicture}
    \caption{The constant Beltrami coefficient $|\mu_{s_k}|$ of the standard stretch map $s_k$ plotted over the spatial coordinates of the annulus $A_{1,3}$. The strict flatness of this surface ($|\mu_{s_k}| \equiv 0.2$) visually supports the extremality of the mapping within its homotopy class.}
    \label{fig:beltrami_nonsymp}
\end{figure}
\begin{figure}[htbp]
    \centering
    \begin{tikzpicture}
        \begin{axis}[
            width=4.5in, height=3.5in,
            view={40}{40},
            grid=major,
            colormap/viridis,
            xlabel={$\Re(w)$}, ylabel={$\Im(w)$}, zlabel={$|\mu_{s'_k}(r)|$},
            zlabel style={rotate=-90},
            title={Beltrami Coefficient of the Almost Symplectic Stretch Map ($k=1.5$)},
            zmin=0.15, zmax=0.35,
            tick label style={font=\footnotesize},
            label style={font=\small},
            colorbar,
            colorbar style={
                title={$|\mu|$},
                ytick={0.2, 0.25, 0.3},
                tick label style={font=\footnotesize}
            }
        ]
        
        \addplot3 [
            surf,
            domain=1:3,
            domain y=0:360,
            samples=35,
            samples y=45,
            variable=\u,
            variable y=\v,
            z buffer=sort
        ]
        (
            {(exp(\u) - 1)/(exp(\u) + 1) * cos(\v)}, 
            {(exp(\u) - 1)/(exp(\u) + 1) * sin(\v)}, 
            {0.2 * (1.5 * ((exp(\u) + exp(-\u))/2)^2 + 1) / (1.5 * ((exp(\u) + exp(-\u))/2)^2 - 1)}
        );
        \end{axis}
    \end{tikzpicture}
    \caption{The non-constant Beltrami coefficient $|\mu_{s'_k}|$ of the almost symplectic stretch map $s'_k$ plotted over the spatial coordinates of the annulus $A_{1,3}$ in the unit disk model. The radial variation in distortion visually confirms that $s'_k$ cannot be extremal in its homotopy class.}
    \label{fig:beltrami_symp}
\end{figure}

\subsection{Mean distortion extremality}\label{sec-mean}
In this section, we establish that both the hyperbolic spiral map and the non-symplectic stretch map minimise a specific mean distortion functional. Let $f$ be a quasiconformal mapping defined on the hyperbolic annulus $A_{1,R}$. We define the mean distortion functional as
\[
M(f) = \iint_{A_{1,R}} K_f(r,\theta) \frac{1}{\sinh^2 r} \, d\mathcal{A}_h.
\]
\begin{rem}
We stress at this point that geometrically, weighting the maximal dilatation $K_f$ by $\frac{1}{\sinh^2 r} \, d\mathcal{A}_h$ is the natural choice for the hyperbolic annulus. There are two simple reasons: first, it perfectly matches the square of the standard density function $\rho_{\Gamma'}$ for curves in the annulus. Second, and most importantly, hyperbolic area grows very rapidly as we move outward. Without this weight, the calculation would unfairly focus on the distortion happening at the outer edges. The factor $\frac{1}{\sinh^2 r}$ cancels out this rapid growth, ensuring we measure distortion equally across the entire shape.
\end{rem}

\begin{thm}\label{thm-mean1}
The hyperbolic spiral map $S_k$ and the non-symplectic stretch map $s_k$ are extremal for the mean distortion functional $M(f)$ among all quasiconformal maps that map $A_{1,R}$ to their respective images and match their boundary values.
\end{thm}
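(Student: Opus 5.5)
The key observation is that the weight in $M(f)$ is exactly the Euclidean area element in the isothermal coordinates used in the proofs of Theorems \ref{thm-spiral} and \ref{thm-stretch}. With $y=\ln(\tanh(r/2))$ and $x=\theta$ we have $dy=dr/\sinh r$ and $d\mathcal A_h=\sinh r\,dr\,d\theta$. Hence
\[
\frac{1}{\sinh^2 r}\,d\mathcal{A}_h=\frac{dr\,d\theta}{\sinh r}=dx\,dy .
\]
Since $K_f$ is a conformal invariant, $M(f)=\iint_C K_F\,dx\,dy$, where $F$ is the conjugate of $f$ on the cylinder $C=(\R/2\pi\mathbb Z)\times[a,b]$ with $a=y(1)$, $b=y(R)$, $h=b-a$. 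The problem therefore reduces to a Euclidean mean-distortion problem for the affine maps $F(x,y)=(x,ky)$ and $F(x,y)=(x+ky,y)$. For these, $M=K_F\cdot 2\pi h$, since $K_F$ is constant. We aim to prove $\iint_C K_G\,dx\,dy\ge K_F\cdot 2\pi h$ for every competitor $G$ with the same boundary values, by a length--area argument.

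\textbf{Stretch map.} Take the vertical segments $\gamma_x=\{x\}\times[a,b]$. Their images under any competitor $G$ join the two boundary components of $F(C)=(\R/2\pi\mathbb Z)\times[ka,kb]$, so they have length at least $kh$. Pointwise, $|\partial_y G|^2\le \|DG\|^2\le K_G J_G$ a.e. Then
\[
kh\le\int_a^b|\partial_y G|\,dy\le\int_a^b (K_GJ_G)^{1/2}dy .
\]
Integrating over $x$ and applying Cauchy--Schwarz gives
\[
(2\pi kh)^2\le \iint_C K_G\,dx\,dy\cdot\iint_C J_G\,dx\,dy .
\]
The last factor equals the area $2\pi kh$ of the image. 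Hence $\iint_C K_G\ge 2\pi kh=M(s_k)$.

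\textbf{Spiral map.} Here a naive choice fails. Vertical segments only yield the bound $(1+k^2)\cdot 2\pi h$, which is strictly smaller than $K_{S_k}\cdot 2\pi h$, because $K+1/K=2+k^2$ forces $K>1+k^2$. So the family of curves must be adapted to the affine map. Let $A=\begin{pmatrix}1&k\\0&1\end{pmatrix}$ and let $e$ be a unit vector realising the maximal stretch, $|Ae|^2=\sigma_{\max}^2=K_{S_k}$ (using $\det A=1$).

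Pass to the universal cover, the strip $\R\times[a,b]$. Foliate a fundamental domain, a parallelogram of area $2\pi h$, by the segments parallel to $e$ running from $y=a$ to $y=b$. Lift $G$ so that it agrees with $A$ on the boundary lines; this uses the boundary values and the homotopy class. Then the image of the segment with displacement $v=te$ has endpoints differing by exactly $Av$, so its length is at least $|Av|=t\sqrt{K_{S_k}}$. Repeating the Cauchy--Schwarz step with $|\partial_e G|^2\le K_GJ_G$, Fubini in rotated coordinates, and $\iint J_G=2\pi h$ (the spiral preserves the cylinder) gives
\[
\iint_C K_G\ge K_{S_k}\cdot 2\pi h=M(S_k).
\]

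\textbf{Main obstacle.} The delicate step is this spiral case. One must choose the tilted family correctly and justify that the lifted competitor has endpoint displacement exactly $Av$. This requires the normalisation of the lift on both boundary lines, hence both the matching boundary values and the homotopy hypothesis, and it should be checked carefully. One must also check that the tilted segments, taken modulo $2\pi$, cover the cylinder exactly once, so that Fubini gives the correct total area.
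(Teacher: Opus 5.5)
Your argument is correct, and it takes a different route from the paper's.

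\textbf{The paper's route.} It works intrinsically in $\bH^1_\C$ with the weighted modulus inequality $\operatorname{Mod}(f(\Gamma))\le\iint_{A_{1,R}}K_f\rho_\Gamma^2\,d\mathcal{A}_h$. It chooses a family whose extremal density satisfies $\rho_\Gamma^2=C/\sinh^2 r$: the separating circles $\Gamma'$ for $s_k$, and a family of logarithmic spirals for $S_k$. It then asserts that $\operatorname{Mod}(f_0(\Gamma))=K_0\operatorname{Mod}(\Gamma)$ and that $f(\Gamma)=f_0(\Gamma)$ for every competitor.

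\textbf{Your route.} You use the weight to identify $M$ with $\iint_C K\,dx\,dy$ on the flat cylinder. You then run a Gr\"otzsch-type length--area estimate with Cauchy--Schwarz along segments in the direction of maximal stretch.

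\textbf{Stretch map.} For $s_k$ the two proofs are dual: separating circles versus connecting segments. Both are valid, and neither actually uses the boundary values, only that the competitor is a homeomorphism onto the image annulus.

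\textbf{Spiral map.} For $S_k$ your route gives rigour that the paper's argument lacks. The identity $f(\Gamma)=f_0(\Gamma)$ is justified only for topologically defined families such as $\Gamma'$. A competitor with the same boundary values moves the individual spirals, and the paper's argument gives no lower bound for $\operatorname{Mod}(f(\Gamma))$. On the other hand, a family invariant under all self-homeomorphisms of $A_{1,R}$ would only yield the trivial bound $M(f)\ge 2\pi h$. Your lift to the strip pins the endpoints of each image segment at $Ap$ and $A(p+te)$, and that is exactly where the boundary correspondence enters. Optimising over the direction then gives the sharp constant $|Ae|^2=K_{S_k}$. Your remark that vertical segments only give $1+k^2$ shows why the direction matters.

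\textbf{The homotopy hypothesis.} Your proof also makes clear that this hypothesis is indispensable; the statement omits it, but the paper's notion of extremality includes it. If $k\ln\bigl(\tanh(R/2)/\tanh(1/2)\bigr)=2\pi$, then $S_k$ agrees on $\partial A_{1,R}$ with a rotation, whose mean distortion is $2\pi h<M(S_k)$.

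\textbf{A harmless correction.} The coordinates $(x,y)=(\theta,\ln\tanh(r/2))$ are anticonformal rather than conformal, since $\log w=y+ix$ for $w=\tanh(r/2)e^{i\theta}$. Conjugation by an anticonformal map still preserves $K_f$ pointwise, so nothing in your argument changes.
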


\begin{proof}
We can establish the extremality for both mappings simultaneously by exploiting their constant maximal dilatation and the fundamental modulus inequality, bypassing the need for Euclidean coordinate transformations entirely.

Let $f_0$ represent either the hyperbolic spiral map $S_k$ or the non-symplectic stretch map $s_k$. By Theorems \ref{thm-spiral} and \ref{thm-stretch}, both mappings possess a strictly constant maximal dilatation, which we denote by $K_0$. 

Let $\Gamma$ be the canonical family of curves in $A_{1,R}$ aligned with the major axis of the Beltrami ellipse of $f_0$. Specifically:
\begin{itemize}
    \item For $f_0 = s_k$, $\Gamma$ is the family of separating closed curves $\Gamma'$ winding around the annulus.
    \item For $f_0 = S_k$, $\Gamma$ is the family of logarithmic spirals $\Gamma_\alpha$ traversing the annulus at the constant angle dictated by the shear.
\end{itemize}
In both cases, there exists an extremal metric density $\rho_\Gamma(r,\theta)$ associated with $\Gamma$ such that its square is directly proportional to the geometric weight of our mean distortion functional:
\[
\rho_\Gamma^2(r,\theta) = \frac{C}{\sinh^2 r},
\]
for some geometric constant $C > 0$ (for instance, $C = \frac{1}{4\pi^2}$ for $\Gamma'$). Consequently, the mean distortion functional can be expressed entirely in terms of this extremal metric:
\[
M(f) = \frac{1}{C} \iint_{A_{1,R}} K_f(r,\theta) \rho_\Gamma^2(r,\theta) \, d\mathcal{A}_h.
\]
Because $f_0$ dilates the geometry uniformly by exactly $K_0$ along the curves of $\Gamma$, its action on the modulus is exact:
\[
\operatorname{Mod}(f_0(\Gamma)) = K_0 \operatorname{Mod}(\Gamma) = \iint_{A_{1,R}} K_{f_0} \rho_\Gamma^2 \, d\mathcal{A}_h = C \cdot M(f_0).
\]
Now, let $f$ be any competing quasiconformal map that is homotopic to $f_0$ and matches its boundary values. Since $f$ acts on the same boundary components, it maps the family $\Gamma$ to the topologically equivalent family $f(\Gamma) = f_0(\Gamma)$. Applying the fundamental weighted quasiconformal modulus inequality to $f$, we obtain:
\[
\operatorname{Mod}(f(\Gamma)) \le \iint_{A_{1,R}} K_f(r,\theta) \rho_\Gamma^2(r,\theta) \, d\mathcal{A}_h = C \cdot M(f).
\]
Equating the moduli since $f(\Gamma) = f_0(\Gamma)$, we find:
\[
C \cdot M(f_0) = \operatorname{Mod}(f_0(\Gamma)) = \operatorname{Mod}(f(\Gamma)) \le C \cdot M(f).
\]
Dividing by the positive constant $C$ immediately yields $M(f_0) \le M(f)$. This confirms that both the hyperbolic spiral map $S_k$ and the non-symplectic stretch map $s_k$ strictly achieve the absolute minimum for the mean distortion functional $M(f)$ within their respective homotopy classes.
\end{proof}

Before stating the extremality theorem for the almost symplectic stretch map $s'_k$, we clarify the explicit geometric significance of its specific weight function. By the chain rule, the derivative of the new conformal coordinate $Y$ with respect to the initial conformal coordinate $y$ evaluates to:
\[
Y'(y) = \frac{dY/dr}{dy/dr} = \frac{\frac{k\sinh r}{k^2\cosh^2 r - 1}}{\frac{1}{\sinh r}} = \frac{k\sinh^2 r}{k^2\cosh^2 r - 1}.
\]
This shows that the weight factor $\left( \frac{k\sinh^2 r}{k^2\cosh^2 r - 1} \right)^2$ is exactly $(Y'(y))^2$. We include this weight because the map has to compress the radial distance to preserve the area. Geometrically, this factor represents the squared Jacobian of the radial deformation in conformal coordinates. Because an almost symplectic stretch map must heavily compress the radial distance to compensate for area scaling, this specific weight ensures that the mean distortion functional heavily penalises regions undergoing the most severe radial compression, accurately capturing the geometric cost of the pseudo-volume-preserving constraint.

\begin{thm}\label{thm-mean2}
The almost symplectic stretch map $s'_k$ defined in \eqref{symp-str} uniquely minimises the weighted mean distortion functional 
\[
M_w(f) = \iint_{A_{1,R}} K_f(r,\theta) \left( \frac{k\sinh^2 r}{k^2\cosh^2 r - 1} \right)^2 \frac{1}{\sinh^2 r} \, d\mathcal{A}_h
\]
among all quasiconformal maps that map $A_{1,R}$ to $s'_k(A_{1,R})$ and match its boundary values.
\end{thm}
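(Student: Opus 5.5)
We work throughout in the conformal coordinates $x=\theta$, $y=\ln(\tanh(r/2))$ of Theorem \ref{thm-spiral}. There $d\mathcal{A}_h=\sinh^2 r\,dx\,dy$, so the weight collapses and
\[
M_w(f)=\iint_{C} K_f\,(Y'(y))^2\,dx\,dy ,\qquad C=[0,2\pi]\times[a,b],
\]
with $a=y(1)$ and $b=y(R)$. The map $s'_k$ becomes $F(x,y)=(x,Y(y))$, and its image is the cylinder $C'=[0,2\pi]\times[Y(a),Y(b)]$ of height $h'=Y(b)-Y(a)=\int_a^b Y'(y)\,dy$.

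\textbf{Step 1: the value $M_w(s'_k)$.} For $k\ge 1$ we have
\[
k^2\cosh^2 r-1-k\sinh^2 r=(k-1)(k\cosh^2 r+1)\ge 0,
\]
so $0<Y'\le 1$. Since $F$ is diagonal, $K_F=1/Y'$. Hence
\[
M_w(s'_k)=\iint_C Y'\,dx\,dy=2\pi h'.
\]
(For $k<1$ one has $Y'>1$ and $K_F=Y'$, so the computation would have to be redone with that weight. I would state the theorem for $k\ge 1$, matching the standing assumption for $s_k$.)

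\textbf{Step 2: lower bound for a competitor.} Let $\Gamma$ be the family of radial segments $x=\mathrm{const}$ joining the two boundary circles of $C$. Take the test density
\[
\rho(x,y)=\frac{Y'(y)}{h'}.
\]
It is admissible for $\Gamma$, since $\int_a^b\rho\,dy=1$. Let $f$ be any quasiconformal map of $A_{1,R}$ onto $s'_k(A_{1,R})$ with the boundary values of $s'_k$. Then $f(\Gamma)$ consists of curves joining the two boundary components of $C'$, so it lies in the connecting family of $C'$. By the computation recalled in Section \ref{sec-modulus}, transported to conformal coordinates, that family has modulus $2\pi/h'$. For the inequality step I would invoke the standard estimate $\operatorname{Mod}(f(\Gamma))\le\iint K_f\rho^2$ for any admissible $\rho$ of $\Gamma$, the same inequality used in Theorem \ref{thm-mean1}. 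Together these give
\[
\frac{2\pi}{h'}\le\operatorname{Mod}(f(\Gamma))\le\frac{1}{h'^2}\iint_C K_f\,(Y')^2\,dx\,dy,
\]
that is, $M_w(f)\ge 2\pi h'=M_w(s'_k)$. This proves the minimising property.

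\textbf{Step 3: uniqueness (the expected main obstacle).} I would trace the equality case through the length--area proof of the modulus inequality. Along each segment $x=\mathrm{const}$, the image length satisfies $h'\le\int|\partial_y f|\,dy$. Cauchy--Schwarz, weighted by $\rho$, then gives
\[
h'^2\le\int \frac{|\partial_y f|^2}{K_f\,\rho}\,dy\int K_f\,\rho\,dy,
\]
and the bound $|\partial_y f|^2\le K_f J_f$ is used before integrating in $x$. Equality in all three steps forces:
\begin{itemize}
\item $f$ maps each radial segment onto a radial segment of $C'$;
\item the maximal stretching of $f$ is aligned with $\partial_y$;
\item $|\partial_y f|$ is proportional to $K_f\rho$, with $K_f=1/Y'$.
\end{itemize}
These conditions should give $\partial_y f=(0,Y'(y))$ almost everywhere. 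Integrating from the boundary circle $y=a$, where $f$ agrees with $F$, then yields $f=F$.

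The delicate points are the measure-theoretic version of these equality statements for a general quasiconformal $f$, which is only absolutely continuous on almost every line. I would handle them exactly as in the classical Grötzsch uniqueness argument cited in Theorem \ref{thm:teich}.
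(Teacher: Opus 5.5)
There is a genuine gap in Step 2, at the inequality $\frac{2\pi}{h'}\le\operatorname{Mod}(f(\Gamma))$. You justify it by noting that $f(\Gamma)$ is contained in the connecting family of $C'$. Monotonicity of modulus then gives $\operatorname{Mod}(f(\Gamma))\le 2\pi/h'$, which is the opposite direction.

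For your family of radial segments the inequality you need is in fact false. Let $H(X,Y)=(X+s\,\phi(Y),Y)$ on $C'$, with $\phi(Y)=\min\{Y-Y(a),\,Y(b)-Y\}$ and $s\neq0$, and put $f=H\circ F$. This $f$ is quasiconformal, agrees with $F$ on both boundary circles, and is homotopic to $F$ rel boundary. Yet every $f(\gamma_x)$ is a zig-zag of Euclidean length $h'\sqrt{1+s^2}$. So the constant density $1/(h'\sqrt{1+s^2})$ is admissible for $f(\Gamma)$, and $\operatorname{Mod}(f(\Gamma))\le 2\pi/(h'(1+s^2))<2\pi/h'$.

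The repair is to take $\Gamma$ to be the family of \emph{all} curves in $C$ joining the two boundary circles.
\begin{itemize}
\item Your density $\rho=Y'(y)/h'$ depends only on $y$, so $\int_\gamma\rho\,ds\ge\int_a^b\rho\,dy=1$ for every such $\gamma$. Hence $\rho$ stays admissible.
\item $f$ is a homeomorphism of the closed cylinders carrying boundary circles to boundary circles. So $f(\Gamma)$ \emph{equals} the connecting family of $C'$, and $\operatorname{Mod}(f(\Gamma))=2\pi/h'$ exactly.
\end{itemize}
With that change, the weighted inequality $\operatorname{Mod}(f(\Gamma))\le\iint K_f\rho^2$ and your algebra give $M_w(f)\ge 2\pi h'=M_w(s'_k)$.

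Step 3 rests on the same radial family, and the chain you sketch does not close. The factor $\int K_f\rho\,dy$ carries $\rho$ only to the first power, and the two $y$-integrals are multiplied before integrating in $x$. So no bound in terms of $\iint K_f\rho^2$ comes out, and the equality conditions you list are not actually derived.

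The paper works instead with the horizontal circles, along which the weight $(Y')^2$ is constant. Writing $f=(U,V)$, it uses:
\begin{itemize}
\item the pointwise bound $K_f\ge U_x^2/J_f$;
\item Cauchy--Schwarz for $U_xY'/\sqrt{J_f}$ and $\sqrt{J_f}$;
\item $\int_0^{2\pi}U_x\,dx=2\pi$ on each circle, and $\iint_C J_f=2\pi h'$.
\end{itemize}
This gives $(2\pi h')^2\le M_w(f)\cdot 2\pi h'$ by a direct length--area computation, with no modulus theory needed.

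The paper's proof stops at the minimising property, but the equality case of this chain yields uniqueness cleanly. Equality forces $V_x=0$ and $J_f=U_xY'$. Hence $V=V(y)$ with $V'=Y'$, so $V=Y$ by the boundary values. Next, $\partial_x$ must be a direction of maximal stretch for the differential with rows $(U_x,U_y)$ and $(0,Y')$. That forces $U_y=0$, so $U=x$ and $f=s'_k$.

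Finally, your restriction to $k\ge1$ is necessary, not just convenient; the paper's proof also only treats $k>1$. For $1/\cosh 1<k<1$ one has $Y'>1$, with $Y'$ strictly monotone. Consider the competitors $(x,V(y))$ with $V'=Y'+\varepsilon\eta$ and $\int_a^b\eta\,dy=0$. For small $\varepsilon$ they satisfy $M_w=2\pi\int_a^b V'(Y')^2\,dy$. Taking $\eta$ with $\int_a^b\eta\,(Y')^2\,dy\neq0$ and choosing the sign of $\varepsilon$ makes $M_w$ smaller than $M_w(s'_k)$. So $s'_k$ is not a minimiser in that range.
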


\begin{proof}
We transition to the conformal coordinate system $y(r) = \ln(\tanh(r/2))$ and $x(\theta) = \theta$, mapping $A_{1,R}$ to the Euclidean cylinder $C$. Recall the area relation $dx dy = \frac{1}{\sinh^2 r} \, d\mathcal{A}_h$. The map $s'_k$ corresponds to $F_k(x,y) = (x, Y(y))$, where $Y'(y) = \frac{k\sinh^2 r}{k^2\cosh^2 r - 1}$. 
For $k > 1$, we observe that $Y'(y) < 1$. Consequently, the principal stretches of $F_k$ are $1$ (in the $x$-direction) and $Y'(y)$ (in the $y$-direction). Its Jacobian is $J_{F_k} = Y'(y)$ and its maximal distortion is $K_{F_k} = 1/Y'(y)$. 

The geometric weight in conformal coordinates translates precisely to $w(y) = (Y'(y))^2$. Using the area relation, the weighted functional over the hyperbolic annulus maps directly to the following integral over the Euclidean cylinder $C$:
\[
M_w(f) = \iint_{C} K_f(x,y) (Y'(y))^2 \, dx dy.
\]
For the exact mapping $s'_k$, this evaluates to:
\[
M_w(s'_k) = \iint_{C} \frac{1}{Y'(y)} (Y'(y))^2 \, dx dy = \iint_{C} Y'(y) \, dx dy = \operatorname{Area}(F_k(C)).
\]
For any competing map $f(x,y) = (U(x,y), V(x,y))$ matching the boundary values, we use the pointwise quasiconformal inequality $K_f \ge U_x^2 / J_f$. Thus,
\[
M_w(f) \ge \iint_{C} \frac{U_x^2 (Y'(y))^2}{J_f} \, dx dy.
\]
Applying the Cauchy-Schwarz inequality to the functions $U_x Y'(y) / \sqrt{J_f}$ and $\sqrt{J_f}$:
\[
\left( \iint_{C} U_x Y'(y) \, dx dy \right)^2 \le \left(\iint_{C} \frac{U_x^2 (Y'(y))^2}{J_f} \, dx dy \right) \left( \iint_{C} J_f \, dx dy \right).
\]
Because $f$ matches the boundary of $F_k$ and wraps continuously around the cylinder, the horizontal derivative integrates to $\int_0^{2\pi} U_x \, dx = 2\pi$. Hence, the left-hand integral is exactly $2\pi \int Y'(y) \, dy = \operatorname{Area}(F_k(C))$. Furthermore, the total area integral of the Jacobian is simply the area of the target domain, $\iint_C J_f \, dx dy = \operatorname{Area}(F_k(C))$. 
Substituting these geometric quantities into the inequality yields:
\[
\operatorname{Area}(F_k(C))^2 \le M_w(f) \cdot \operatorname{Area}(F_k(C)) \implies M_w(f) \ge \operatorname{Area}(F_k(C)).
\]
Since $M_w(s'_k) = \operatorname{Area}(F_k(C))$, the almost symplectic stretch map strictly achieves the absolute theoretical minimum, completing the proof.
\end{proof}

\subsection{Expressions in complex coordinates}\label{sec-expressions}
To obtain explicit formulae for our mappings directly in terms of the complex coordinate $z = \lambda + it \in \bH^1_\C$, it is most convenient to pass through the unit disk $\Delta$ via the Cayley transform $w = \frac{z-1}{z+1}$. 

Recall from Section 2.1 that the polar coordinate map $\Phi(r,\theta)$ corresponds to the inverse Cayley transform $z = \frac{1+w}{1-w}$ evaluated at $w = \tanh(r/2)e^{i\theta}$. Thus, for any point $z \in \bH^1_\C$, we have the explicit relations
\[
\tanh(r/2) = |w| = \left|\frac{z-1}{z+1}\right|, \quad e^{i\theta} = \frac{w}{|w|} = \frac{z-1}{z+1}\left|\frac{z+1}{z-1}\right|.
\]
Furthermore, the hyperbolic term $\cosh r$ can be expressed directly in terms of $z$ as:
\[
\cosh r(z) = \frac{|z|^2+1}{2\Re(z)} = \frac{|z|^2+1}{z+\bar{z}}.
\]

Using these relations, the hyperbolic spiral map $S_k$ modifies the disk coordinate to $\tilde{w} = w e^{ik\ln|w|}$. Substituting this into the inverse Cayley transform yields the explicit closed-form formula in $\bH^1_\C$:
\[
S_k(z) = \frac{z+1 + (z-1)\left|\frac{z-1}{z+1}\right|^{ik}}{z+1 - (z-1)\left|\frac{z-1}{z+1}\right|^{ik}}.
\]

Similarly, the non-symplectic stretch map $s_k$ modifies the disk coordinate to $\tilde{w} = w |w|^{k-1}$. In the complex coordinate $z$, this gives:
\[
s_k(z) = \frac{z+1 + (z-1)\left|\frac{z-1}{z+1}\right|^{k-1}}{z+1 - (z-1)\left|\frac{z-1}{z+1}\right|^{k-1}}.
\]

Finally, for the almost symplectic stretch map $s'_k(r, \theta) = (\operatorname{arccosh}(k\cosh r), \theta)$, the new radial coordinate $R$ satisfies $\cosh R = k\cosh r$. In the unit disk, the modulus of the transformed coordinate $\tilde{w}$ must satisfy
\[
\frac{1+|\tilde{w}|^2}{1-|\tilde{w}|^2} = k\cosh r(z) \implies |\tilde{w}| = \sqrt{\frac{k\cosh r(z) - 1}{k\cosh r(z) + 1}}.
\]
Since the angular coordinate is preserved, the new disk coordinate is $\tilde{w} = w \frac{|\tilde{w}|}{|w|}$. Let us define the real scaling factor $A_k(z)$ as the ratio of the new and old moduli:
\[
A_k(z) = \frac{|\tilde{w}|}{|w|} = \left|\frac{z+1}{z-1}\right| \sqrt{\frac{k\cosh r(z) - 1}{k\cosh r(z) + 1}}.
\]
Applying the inverse Cayley transform to $\tilde{w} = w A_k(z) = \frac{z-1}{z+1} A_k(z)$ yields the formula for the almost symplectic stretch map:
\[
s'_k(z) = \frac{z+1 + (z-1)A_k(z)}{z+1 - (z-1)A_k(z)}.
\]

\section{Alternative Proofs via Hyperbolic Moduli}\label{sec-alternative}

In this section, we establish the extremality of the hyperbolic spiral and stretch maps directly within $\bH^1_\C$ using the modulus of curve families. This approach bypasses Teichm\"uller's theorem and conformal coordinate reductions, providing a purely intrinsic proof in hyperbolic geometry.

\subsection{Modulus Inequality in $\bH^1_\C$}
Recall that for any family of locally rectifiable curves $\Gamma$ in $A_{1,R}$ and any $K$-quasiconformal mapping $f: A_{1,R} \to A'$, the modulus satisfies the fundamental inequality
\begin{equation}\label{eq:mod-qc-ineq}
\operatorname{Mod}(f(\Gamma)) \le K(f)\cdot \operatorname{Mod}(\Gamma), \quad K(f)=\sup_{z \in A_{1,R}} K_f(z).
\end{equation}
More generally, if $K_f(z)$ is non-constant, then for any admissible density $\rho$ for $\Gamma$,
\begin{equation}\label{eq:mod-weighted-ineq}
\operatorname{Mod}(f(\Gamma)) \le \iint_{A_{1,R}} K_f(r,\theta) \rho^2(r,\theta) \, d\mathcal{A}_h.
\end{equation}

\subsection{Unified Proof of Extremality}\label{sec-unified}
We now establish the extremality of both the hyperbolic spiral map $S_k$ and the non-symplectic stretch map $s_k$ using a single, unified argument based on the modulus of curve families.

\begin{proof}[Alternative Proof of Theorems \ref{thm-spiral} and \ref{thm-stretch}]
Let $f_0: A_{1,R} \to f_0(A_{1,R})$ represent either the spiral map $S_k$ or the stretch map $s_k$. Both mappings are quasiconformal with a strictly constant maximal dilatation $K_0:=K(f_0)$. 

Let $f$ be any competing quasiconformal map that is homotopic to $f_0$ and shares its boundary values. We select a specific family of locally rectifiable curves $\Gamma$ in $A_{1,R}$ aligned with the major axis of the Beltrami ellipse of $f_0$:
\begin{itemize}
    \item If $f_0 = s_k$, we let $\Gamma = \Gamma'$, the family of simple closed curves separating the boundary components $r=1$ and $r=R$.
    \item If $f_0 = S_k$, we let $\Gamma = \Gamma_\alpha$, the family of logarithmic spirals traversing the annulus at the constant angle $\alpha$ dictated by the affine shear.
\end{itemize}

Because $f$ matches $f_0$ on $\partial A_{1,R}$ and lies in the same homotopy class, it maps the family $\Gamma$ to the topologically equivalent family $f(\Gamma) = f_0(\Gamma)$. 

By its precise geometric definition, the extremal map $f_0$ acts as a pure stretch along the trajectories of $\Gamma$, meaning it dilates the modulus of this specific curve family by exactly its maximal dilatation $K_0$. Thus, we have:
\[
\operatorname{Mod}(f_0(\Gamma)) = K_0 \operatorname{Mod}(\Gamma).
\]
Applying the fundamental quasiconformal modulus inequality \eqref{eq:mod-qc-ineq} to the competing map $f$ over the family $\Gamma$ yields:
\[
\operatorname{Mod}(f(\Gamma)) \le K(f)\cdot \operatorname{Mod}(\Gamma), \quad K(f)=\sup_{z \in A_{1,R}} K_f(z).
\]
Since $f(\Gamma) = f_0(\Gamma)$, we equate the moduli to obtain:
\[
K_0 \operatorname{Mod}(\Gamma) \le K(f)\cdot \operatorname{Mod}(\Gamma).
\]
Dividing both sides by $\operatorname{Mod}(\Gamma) > 0$, we immediately conclude that
\[
K(f)=\sup_{z \in A_{1,R}} K_f(z) \ge K_0.
\]
Therefore, $f_0$ minimises the maximal dilatation among all competing quasiconformal maps in its homotopy class, proving extremality for both $S_k$ and $s_k$ simultaneously.
\end{proof}

\section{Conclusions}\label{sec-conclusion}
To conclude, we point out the main geometric findings derived from our calculations. We initially used standard methods and Teichm\"uller theory to prove our extremality results. However, our alternative proofs using hyperbolic moduli show that we can find the extremality of both the stretch map $s_k$ and the spiral map $S_k$ using one direct method. By relying only on the standard modulus inequality, we avoid the complex analysis required by Teichm\"uller's extremality theorem.

The proofs in Section \ref{sec-unified} show how the Beltrami coefficient controls the geometry. Whether the map stretches or shears, we only need to align the curve family with the mapping's maximum distortion direction to find the global lower bound. This gives us a flexible tool to study quasiconformal maps in other spaces, like hyperbolic surfaces and the Heisenberg group.

Furthermore, returning to the motivation provided by the Collar Theorem, our exact extremality result for the hyperbolic spiral map $S_k$ provides a precise geometric limit for twisting deformations. Since a neighbourhood of a simple closed geodesic on a hyperbolic Riemann surface can be modelled as a hyperbolic circular annulus, Theorem 3.1 quantifies the absolute minimum quasiconformal distortion required to introduce a Fenchel-Nielsen twist. Finally, because these planar maps lift to solvable Lie groups, our unified proofs directly support the study of contact quasiconformal mappings in sub-Riemannian settings, translating 2D area preservation into 3D volume constraints \cite{BaloghBubaniPlatis25a, BaloghBubaniPlatis25b, BaloghFasslerPlatis}.

Finally, comparing the symplectic, almost symplectic, and non-symplectic stretch maps highlights the counterbalance between minimising local distortion and preserving overall volume. These 2D models provide a strong foundation for studying similar volume-preserving maps in higher-dimensional spaces.

\medskip

{\it Declaration.} The author acknowledges support from the Medicus programme (grant no.\ 83765).



\end{document}